\documentclass{amsart}
\usepackage{amssymb,amsmath}
\usepackage{amsthm}
\usepackage{mathrsfs}
\usepackage{enumerate, indentfirst}
\usepackage[fleqn,tbtags]{mathtools}

\newtheorem{theorem}{Theorem}[section]
\newtheorem{lemma}[theorem]{Lemma}

\theoremstyle{definition}

 \theoremstyle{remark}
\newtheorem{remark}[theorem]{Remark}

\numberwithin{equation}{section}

\renewcommand{\phi}{\varphi}

\newcommand{\lmu}{\mathscr{L}^2(\mu)}
\newcommand{\lnu}{\mathscr{L}^2(\nu)}

\newcommand{\alg}{\mathscr{R}}

\newcommand{\dupN}{\mathbb{N}}
\newcommand{\seq}[1]{(#1_{n})_{n\in\dupN}}

\newcommand{\nen}{n\in\mathbb{N}}
\newcommand{\dupR}{\mathbb{R}}

\newcommand{\hil}{\mathfrak{H}}

\newcommand{\kil}{\mathfrak{K}}

\DeclarePairedDelimiterX\sipn[2]{(}{)_{\nu}}{#1\,\delimsize\vert\,#2}
\DeclarePairedDelimiterX\sipm[2]{(}{)_{\mu}}{#1\,\delimsize\vert\,#2}
\DeclarePairedDelimiterX\Sipn[2]{(}{)_{\nu}}{#1\,\delimsize\vert\,#2}
\DeclarePairedDelimiterX\Sipm[2]{(}{)_{\mu}}{#1\,\delimsize\vert\,#2}
\newcommand{\cha}[1]{\chi_{_{#1}}}

\begin{document}
\title{Lebesgue decomposition via Riesz orthogonal decomposition}


\author{Zsigmond Tarcsay}
\address{Zs. Tarcsay, Department of Applied Analysis, E\"otv\"os L. University, P\'azm\'any P\'eter s\'et\'any 1/c., Budapest H-1117, Hungary; }
\email{tarcsay@cs.elte.hu}

\begin{abstract}
We give a simple and short proof of the classical  Lebesgue de\-comp\-os\-ition theorem of measures via the  Riesz orthogonal decomposition theorem of Hilbert spaces. The tools we employ are elementary Hilbert space techniques.
\end{abstract}

\keywords{Lebesgue decomposition, orthogonal de\-comp\-osition, orthogonal projection, Hilbert space methods, absolute continuity, singularity}
\subjclass[2010]{Primary 46C05,  46N99}

\maketitle

\section{Introduction}

J. von Neumann in \cite{Neumann} gave a very elegant proof of the classical Radon--Nikodym differentiation theorem, namely, he proved that the Radon--Nikodym theorem follows (relatively easily) from the Riesz representation theorem for bounded linear functionals. Our purpose in this paper is to show how the Lebesgue de\-comp\-osition theorem derives from  Riesz' orthogonal decomposition theorem. More precisely, if $\mu$ and $\nu$ are finite measures on a fixed measurable space $(T,\alg)$ then  the $\mu$-absolute continuous and $\mu$-singular parts of $\nu$ correspond to an appropriate orthogonal decomposition $\mathfrak{M}\oplus\mathfrak{M}^{\perp}$ of $\lnu$.

A very similar approach can be used also by discussing several general Lebesgue-type decomposition problems such as decomposing finitely additive set functions \cite{realanalex}, positive operators on Hilbert spaces \cite{tarcsay}, nonnegative Hermitian forms \cite{sebestytarcsaytitkos}, and representable functionals on $^*$-algebras \cite{tarcsayrepr}. The key in the mentioned papers just as in the present note  is the Riesz orthogonal decomposition theorem applied to a suitable  subspace of an appropriate $\mathscr{L}^2$-space.

We must also mention that  Neumann's proof simultaneously proves Lebesgue's decomposition, at least after making minimal modifications, see e.g. Rudin \cite{Rudin}. His treatment is undoubtedly more elegant and simpler than ours. Our only claim is nothing but to point out the deep connection between Lebesgue decomposition and orthogonal decomposition.

Our proof of the Lebesgue decomposition theorem is based on the following easy lemma which states that the so called multivalued part of a closed linear relation is closed itself, cf. \cite{Arens} or \cite{Hassi2007}.  The proof is just an easy exercise, however, we present it here for the sake of completeness.
\begin{lemma}\label{M(L) lemma}
Let $\hil$ and $\kil$ be (real or complex) Hilbert spaces and let $L$ be a linear subspace of $\hil\times\kil$, that is to say,  a linear relation from $\hil$ into $\kil$. Then
\begin{align}\label{M(L)}
\mathfrak{M}(L):=\big\{k\in\kil:~ \exists(h_n,k_n)_{\nen} ~from~L~with~ h_n\rightarrow0,k_n\rightarrow k\big\}
\end{align}
is a closed subspace of $\kil$.
\end{lemma}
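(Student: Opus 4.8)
The plan is to identify $\mathfrak{M}(L)$ with the inverse image of a closed set under a continuous linear map; this delivers linearity and closedness at once, with essentially no computation.

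First I would reformulate the defining condition in terms of a closure. Write $\overline{L}$ for the norm closure of $L$ in the product Hilbert space $\hil\times\kil$. By the definition of the closure of a set, a pair $(h,k)\in\hil\times\kil$ lies in $\overline{L}$ precisely when there is a sequence $(h_n,k_n)_{\nen}$ in $L$ with $h_n\to h$ and $k_n\to k$. Setting $h=0$ here, we see that $\mathfrak{M}(L)$ is exactly $\{k\in\kil:(0,k)\in\overline{L}\}$, that is, the multivalued part $\mul(\overline{L})$ of the closed relation $\overline{L}$.

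Next I would finish by soft arguments. The closure $\overline{L}$ of the linear subspace $L$ is again a linear subspace, and it is closed by construction. The map $\iota\colon\kil\to\hil\times\kil$, $\iota(k):=(0,k)$, is linear and isometric, hence continuous. Since $\mathfrak{M}(L)=\iota^{-1}(\overline{L})$ is the preimage of a closed linear subspace under a continuous linear map, it is itself a closed linear subspace of $\kil$, which is the assertion.

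I do not foresee any genuine obstacle: the whole content lies in the first step, the remark that the somewhat ad hoc condition defining $\mathfrak{M}(L)$ merely expresses $(0,k)\in\overline{L}$. If one prefers to avoid introducing $\overline{L}$, the closedness of $\mathfrak{M}(L)$ can be shown directly by a diagonal argument: given $k^{(j)}\to k$ with $k^{(j)}\in\mathfrak{M}(L)$ for each $j$, pick $(g_j,l_j)\in L$ with $\|g_j\|<1/j$ and $\|l_j-k^{(j)}\|<1/j$ (possible by the definition of $\mathfrak{M}(L)$); then $g_j\to0$ and $l_j\to k$, so $k\in\mathfrak{M}(L)$. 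Linearity follows at once from the linearity of $L$ by adding two witnessing sequences.
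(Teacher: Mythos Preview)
Your proof is correct and follows essentially the same route as the paper: both hinge on the observation that $k\in\mathfrak{M}(L)$ is equivalent to $(0,k)\in\overline{L}$, after which closedness is immediate. Your finishing step packages this via the continuous linear embedding $\iota(k)=(0,k)$ and the preimage $\iota^{-1}(\overline{L})$, whereas the paper argues directly with sequences; these are the same fact in different clothing, and your version has the minor bonus of making the linearity of $\mathfrak{M}(L)$ explicit.
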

\begin{proof}
Note first that for any $k\in\kil$ the assertion $k\in\mathfrak{M}(L)$ is equivalent to $(0,k)\in \overline{L}$, where $\overline{L}$ denotes the closure of $L$ in the Cartesian product $\hil\times\kil$. Now, if $(k_n)_{\nen}$ is any sequence from $\mathfrak{M}(L)$ with limit point $k\in\kil$, then clearly, $(0,k_n)_{\nen}$ converges in $\overline{L}$, namely to $(0,k)$. Consequently, $(0,k)\in\overline{L}$ according to the closedness of $\overline{L}$ and hence $k\in\mathfrak{M}(L)$.
\end{proof}
\section{The Lebesgue decomposition theorem}

Henceforth, we fix two finite measures $\mu$ and $\nu$ on a measurable space $(T,\alg)$, where $T$ is a non-empty set, and $\alg$ is a $\sigma$-algebra of subsets of $T$. For $E\in\alg$, the characteristic function of $E$ is denoted by $\cha{E}$. The vector space of $\alg$-simple functions, i.e., the linear span of characteristic functions  is denoted by $\mathscr{S}$. We also associate  (real) Hilbert spaces $\lmu$ and $\lnu$ to the measures $\mu$ and $\nu$, respectively, which are endowed with the usual inner products, denoted by $\sipm{\cdot}{\cdot}$ and $\sipn{\cdot}{\cdot}$, respectively. Note that functions belonging to $\lmu$ (resp., to $\lnu$) are just $\mu$-almost everywhere (resp., $\nu$-almost everywhere) determined. For any measurable function $f:T\rightarrow\dupR$ and $c\in\dupR$ we define the measurable set $[f\leq c]$ by letting
\begin{equation*}
    [f\leq c]:=\{x\in T: f(x)\leq c\}.
\end{equation*}
There are defined $[f\geq c]$, $[f=c]$, $[f\neq c]$, etc., similarly.

Recall the notions of absolute continuity and singularity: $\nu$ is said to be \emph{absolutely continuous} with respect to $\mu$ (shortly, $\mu$-absolute continuous) if $\mu(E)=0$ implies $\nu(E)=0$ for all $E\in\alg$; $\nu$ is called \emph{singular} with respect to $\mu$ (shortly, $\mu$-singular) if there exists $S\in\alg$ such that $\mu(S)=0$ and $\nu(T\setminus S)=0$. The Lebesgue decomposition theorem states that  $\nu$ admits a unique decomposition $\nu=\nu_a+\nu_s$, where $\nu_a$ is $\mu$-absolute continuous and $\nu_s$ is $\mu$-singular. The uniqueness can be easily proved via a simple measure theoretic argument (see e.g. \cite{Halmos} or \cite{Rudin}). The essential part of this statement is in the existence of the decomposition.

Let us consider now the following linear subspace of $\lnu$:
\begin{equation*}
    \mathfrak{M}=\left\{f\in\lnu : \exists\seq{\phi}\subset\mathscr{S},~\phi_n\rightarrow0~\textrm{in~}\lmu,~ \phi_n\rightarrow f\textrm{~in~}\lnu\right\}.
\end{equation*}
Then clearly, $\mathfrak{M}=\mathfrak{M}(L)$ by choosing
\begin{equation*}
L:=\{(\phi,\phi):\phi\in\mathscr{S}\}\subseteq\lmu\times\lnu.
\end{equation*}
 Consequently, $\mathfrak{M}$ is closed, according to Lemma \ref{M(L) lemma}. Let us consider the orthogonal projection $P$ of $\lnu$ onto $\mathfrak{M}$, the existence of which is guaranteed by the classical Riesz orthogonal decomposition theorem. Let us introduce the following two (signed) measures:
\begin{equation}\label{decomposition}
    \nu_a(E):=\int\limits_E (I-P)1~d\nu,\qquad \nu_s(E):=\int\limits_E P1~d\nu,\qquad E\in\alg.
\end{equation}
Clearly, $\nu=\nu_a+\nu_s$. We state that this is  the Lebesgue decomposition of $\nu$ with respect to $\mu$:
\begin{theorem}\label{Lebesgue}
Let $\mu$ and $\nu$ be finite measures on a measurable space $(T,\alg)$. Then both $\nu_a$ and $\nu_s$ from \eqref{decomposition} are (finite) measures such that $\nu_a$ is $\mu$-absolute continuous, and $\nu_s$ is $\mu$-singular.
\end{theorem}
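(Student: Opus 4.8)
The plan is to first identify the function $P1$ explicitly: I claim it equals (a.e.) the characteristic function of some measurable set $S$, which makes the two set functions in \eqref{decomposition} manifestly nonnegative. The crucial observation is that $\mathfrak{M}$ is invariant under multiplication by characteristic functions: if $(\phi_n)\subset\mathscr{S}$ is an approximating sequence witnessing $f\in\mathfrak{M}$, then $(\cha{E}\phi_n)\subset\mathscr{S}$ witnesses $\cha{E}f\in\mathfrak{M}$, since multiplication by $\cha{E}$ increases neither the $\lmu$- nor the $\lnu$-norm. Applying this with $f=P1$ and combining it with $(I-P)1\in\mathfrak{M}^{\perp}$, one gets $0=\sipn{(I-P)1}{\cha{E}\,P1}=\int_E P1\,(1-P1)\,d\nu$ for every $E\in\alg$; since $P1\,(1-P1)\in\mathscr{L}^1(\nu)$, this forces $P1\,(1-P1)=0$ $\nu$-almost everywhere, so $P1=\cha{S}$ $\nu$-a.e.\ for a suitable $S\in\alg$. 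Consequently $\nu_s(E)=\nu(E\cap S)$ and $\nu_a(E)=\nu(E\setminus S)$ are restrictions of the finite measure $\nu$, hence finite (positive) measures, with $\nu_a+\nu_s=\nu$.

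Absolute continuity of $\nu_a$ is then immediate: if $\mu(E)=0$, the constant sequence $\phi_n:=\cha{E}$ tends to $0$ in $\lmu$ (it \emph{is} the zero element there) and to $\cha{E}$ in $\lnu$, whence $\cha{E}\in\mathfrak{M}$; orthogonality gives $\nu_a(E)=\int_E(1-P1)\,d\nu=\sipn{\cha{E}}{(I-P)1}=0$.

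For singularity of $\nu_s$ the goal is to exhibit a $\mu$-null set carrying $\nu_s$. Since $\cha{S}=P1\in\mathfrak{M}$, choose $(\phi_n)\subset\mathscr{S}$ with $\phi_n\to0$ in $\lmu$ and $\phi_n\to\cha{S}$ in $\lnu$; after passing to a subsequence we may assume $\phi_n\to0$ $\mu$-a.e.\ and $\phi_n\to\cha{S}$ $\nu$-a.e. Set $A:=\{x:\phi_n(x)\to0\}$ and $B:=\{x:\phi_n(x)\to\cha{S}(x)\}$, measurable sets with $\mu(T\setminus A)=0$ and $\nu(T\setminus B)=0$. On $A\cap B\cap S$ the sequence $(\phi_n)$ would converge both to $0$ and to $1$, which is impossible; hence $A\cap S\subseteq T\setminus B$. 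Taking $N_0:=T\setminus A$ we obtain $\mu(N_0)=0$ and $\nu_s(T\setminus N_0)=\nu(A\cap S)\le\nu(T\setminus B)=0$, so $\nu_s$ is $\mu$-singular.

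The points I am glossing over are routine: an $\mathscr{L}^1(\nu)$-function whose integral over every measurable set vanishes is $\nu$-a.e.\ zero, and convergence in an $\mathscr{L}^2$-space passes to an a.e.-convergent subsequence. I expect the singularity step to be the only delicate part — specifically the double subsequence extraction, and the point that although $S$ is only covered by the union of a $\mu$-null set and a $\nu$-null set, intersecting with $A$ alone already produces a legitimate $\mu$-null carrier $N_0$ for $\nu_s$.
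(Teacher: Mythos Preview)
Your proof is correct, and the absolute-continuity and singularity steps are essentially the paper's (with the singularity argument spelled out more carefully than the paper's somewhat terse ``$P1=0$ $\mu$-a.e.'').  The genuine difference lies in how you obtain positivity.  You observe first that $\mathfrak{M}$ is stable under multiplication by characteristic functions and use this to get $\int_E P1\,(1-P1)\,d\nu=0$ for every $E$, hence $P1=\cha{S}$ directly; positivity of $\nu_a,\nu_s$ then falls out before any appeal to the singularity step.  The paper instead proves singularity first, extracts from it the fact $\mu([P1\neq0])=0$, and only then deduces $0\le P1\le1$ by noting that $\cha{[P1<0]}\in\mathfrak{M}$ (because $[P1<0]$ is $\mu$-null) and computing $\int_{[P1<0]}P1\,d\nu=\sipn{P1}{\cha{[P1<0]}}=\sipn{1}{\cha{[P1<0]}}=\nu([P1<0])$; the identification $P1=\cha{[P1\neq0]}$ is relegated to a remark afterwards.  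Your route is more modular---the idempotency of $P1$ does not depend on the a.e.-convergence argument---while the paper's route is more economical in that it reuses the $\mu$-null set already produced by the singularity step rather than introducing a new structural observation about $\mathfrak{M}$.
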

\begin{proof}
We start by proving the $\mu$-absolute continuity of $\nu_a$: let $E\in\alg$ be any measurable set with $\mu(E)=0$. Then $\cha{E}\in\mathfrak{M}$ (choose $\phi_n:=\cha{E}$ for all integer $n$), and therefore
\begin{equation*}
    \nu_a(E)=\int\limits_{E} (I-P)1~d\nu=\sipn{\cha{E}}{(I-P)1}=\sipn{(I-P)\cha{E}}{1}=\sipn{0}{1}=0.
\end{equation*}
This means that the signed measure $\nu_a$ is absolute continuous with respect to $\mu$. In order to prove the $\mu$-singularity of $\nu_s$, let us consider a sequence $\seq{\phi}$ from $\mathscr{S}$ with $\phi_n\rightarrow0$ in $\lmu$, and $\phi_n\rightarrow P1$ in $\lnu$.  By turning to an appropriate subsequence along the classical Riesz argument \cite{Riesz}, we may also assume that $\phi_n\rightarrow 0$ $\mu$-a.e. This means that $P1=0$ $\mu$-a.e., and therefore that $\nu_s$ and $\mu$ are singular with respect to each other:
\begin{equation}\label{P1=0}
    \mu([P1\neq0])=0\qquad\textrm{and}\qquad\nu_s([P1=0])=\int\limits_{[P1=0]} P1~d\nu=0.
\end{equation}
It remains only to show that $\nu_a$ and $\nu_s$ are \emph{positive} measures, i.e., $0\leq P1\leq 1$ $\nu$-a.e. Indeed, the left side of \eqref{P1=0} yields $\mu([P1<0])=0$ and hence $\cha{[P1<0]}\in\mathfrak{M}$. That gives
\begin{equation*}
    0\geq\int\limits_{[P1<0]} P1~d\nu=\Sipn{P1}{\cha{[P1<0]}}=\Sipn{1}{\cha{[P1<0]}}=\nu([P1<0])\geq0,
\end{equation*}
which means that $P1\geq0$ $\nu$-a.e. A very same argument shows that $P1\leq1$ $\nu$-a.e.
\end{proof}
\begin{remark}
Observe that $P1$ is, in fact, the characteristic function of the set $[P1\neq0]$: since $0\leq P1\leq1$, it follows that $0\leq P1-(P1)^2$, and we also have
\begin{equation*}
    \int\limits_T P1-(P1)^2~d\nu=\int\limits_T P1\cdot(I-P)1~d\nu=\sipn{P1}{(I-P)1}=0,
\end{equation*}
which yields $P1=\cha{[P1\neq0]}$, indeed. Consequently, by letting $S:=[P1\neq0]$ the standard form of the Lebesgue decomposition is obtained as follows:
\begin{equation*}
    \nu_a(E)=\nu(E\setminus S),\quad\nu_s(E)=\nu(E\cap S),\qquad E\in\alg,
\end{equation*}
c.f. \cite[32 \S,  Theorem C]{Halmos}.
\end{remark}

\end{document}